\documentclass[11pt,reqno]{amsart}
\usepackage[margin=1in]{geometry}
\usepackage{amsmath,amssymb,amsthm,mathtools}
\usepackage[T1]{fontenc}
\usepackage{enumitem}
\usepackage{microtype}
\usepackage[colorlinks=true,linkcolor=blue,citecolor=blue,urlcolor=blue]{hyperref}

\newtheorem{theorem}{Theorem}[section]

\newtheorem{lemma}[theorem]{Lemma}
\newtheorem{remark}[theorem]{Remark}

\newcommand{\R}{\mathbb R}

\newcommand{\Sthree}{\mathcal S^3}
\newcommand{\K}{\mathcal K}
\newcommand{\Q}{\mathcal Q}
\newcommand{\tr}{\operatorname{tr}}
\newcommand{\supp}{\operatorname{supp}}
\newcommand{\BV}{BV}

\title[Unbounded variation solutions for uniformly elliptic equations]{Unbounded variation solutions for uniformly elliptic equations in nondivergence form in dimension three}

\author[N. Q. Le, Q. Sun, H. V. Tran]{Nam Q. Le, Qi Sun, Hung V. Tran}

\date{}

\thanks{
N. Q. Le is partially supported by NSF grant DMS-2452320.
H. V. Tran is partially supported by NSF grant DMS-2348305.
}

\address[N. Q. Le]
{Department of Mathematics, Indiana University, 831 E 3rd St, Bloomington, IN 47405}
\email{nqle@iu.edu}

\address[Q. Sun]
{Department of Mathematics, Rutgers University, 110 Frelinghuysen Road Piscataway, NJ 08854}
\email{qs176@math.rutgers.edu}

\address[H. V. Tran]
{Department of Mathematics, University of Wisconsin--Madison, Van Vleck Hall, 480 Lincoln Drive, Madison, Wisconsin 53706, USA}
\email{hung@math.wisc.edu}

\begin{document}
\subjclass[2020]{35J15, 35B65, 35D40}
\keywords{Nondivergence form equations,  uniform ellipticity, Sobolev estimates, bounded variation, approximation solutions}
\begin{abstract}
For each nonnegative integer $m$, we construct smooth symmetric $3\times 3$ coefficient matrices $A_m$ satisfying the fixed ellipticity bound
\[
 I\leq A_m\leq 2^{81}I
\]
for which the smooth solutions of uniformly elliptic equations in nondivergence form
\[
 \tr(A_m(x)D^2 u_m)=A_m(x):D^2u_m=0\qquad\text{in }B_2\subset\R^3
\]
have common Dirichlet data, satisfy $\|u_m\|_{L^\infty(B_2)}\leq1$, but
\[
 \lim_{m\to \infty}\|Du_m\|_{L^1(B_1)}=\infty.
\]
Thus, there is no interior $W^{1,1}$ estimate depending only on ellipticity in dimension three, and consequently no such $W^{1,p}$ estimate for any $p\geq1$. 
This resolves in the negative an open question raised by Nadirashvili, Tkachev, and Vl\u{a}du\c{t}. 
The construction also gives a uniformly convergent limit $u\notin \BV_{\rm loc}(B_1)$ for a measurable uniformly elliptic coefficient matrix obtained as an $L^1$ limit of the $A_m$.
\end{abstract}

\maketitle

\section{Introduction}
In this paper, we are concerned with the validity of interior $W^{1,1}$ estimates for uniformly elliptic equations in nondivergence form in dimension three.

Let $B_r$ denote the ball of radius $r$ centered at the origin in $\R^3$, and let $\Sthree$ be the set of symmetric $3\times 3$ matrices.
Consider uniformly elliptic equations in nondivergence form
\begin{equation}\label{eq:main}
 \tr(A(x)D^2u)=A(x):D^2u=0\qquad\text{in }B_2\subset\R^3,
\end{equation}
where $A:B_2\to\Sthree$ and
\begin{equation}\label{eq:ell}
 I\leq A(x)\leq \Lambda I.
\end{equation}
The Krylov--Safonov theory gives a coefficient-independent interior H\"older estimate for bounded solutions of \eqref{eq:main}; see \cite{KrylovSafonov, SafonovKS} and \cite{GT}. 
Nadirashvili, Tkachev, and Vl\u{a}du\c{t} asked whether one also has
\begin{equation}\label{eq:question}
 \|Du\|_{L^1(B_1)}\leq C(\Lambda)\|u\|_{L^\infty(B_2)}
\end{equation}
for smooth coefficients, and whether solutions for measurable coefficients are locally in $W^{1,1}$; see \cite[Section 1.3 and Problem 1.3.1]{NTV}. 
To the best of our knowledge, the three-dimensional question \eqref{eq:question} has remained open. 
In two dimensions, much stronger regularity is available; see, for example, \cite{Bernstein,BaernsteinKovalev,NTV}.

\medskip
There is a positive near-isotropic regime. 
In dimension three, \eqref{eq:ell} implies the Cordes condition whenever $\Lambda<4$, since
\[
 \frac{|A|^2}{(\tr A)^2}
 \leq \frac{\Lambda^2+2}{(\Lambda+2)^2}<\frac12.
\]
The Cordes--Campanato estimate then gives an interior $W^{2,2}$ bound; see \cite{Cordes,Campanato}. The point here is that, for one fixed finite ellipticity ratio, \eqref{eq:question} fails completely.
Here is our main result, which resolves in the negative the above open question of Nadirashvili, Tkachev, and Vl\u{a}du\c{t}. 

\begin{theorem}[Failure of interior $W^{1,1}$ estimates]
\label{thm:smooth}
There exist sequences $u_m\in C^\infty(B_2)$, $A_m\in C^\infty(B_2;\Sthree)$ such that
\begin{align*}
 &I\leq A_m\leq 2^{81}I,\\
 &A_m:D^2u_m=0\qquad\text{in }B_2,    
\end{align*}
all $u_m$ have the same Dirichlet data $1-x_1^2/2$ on $\partial B_2$, and
\[
\begin{cases}
\|u_m\|_{L^\infty(B_2)}\leq1,\\
\lim_{m\to \infty}\|Du_m\|_{L^1(B_1)}=\infty.
\end{cases}
\]
Moreover, all $u_m$ agree with one fixed quadratic polynomial outside the cube
\[
 Q_0=\left(-\frac12,\frac12\right)^3\Subset B_1.
\]
\end{theorem}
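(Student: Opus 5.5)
The plan is to reduce the construction to one pointwise algebraic fact about Hessians, and then to build the $u_m$ by an explicit multiscale iteration. First, note that $1-x_1^2/2$ is not a solution of any such equation, since its Hessian is negative semidefinite. The fixed quadratic outside $Q_0$ will therefore be
\[
q(x)=1-\tfrac{x_1^2}{2}+\tfrac14(|x|^2-4)=\tfrac14\left(x_2^2+x_3^2-x_1^2\right).
\]
It has the required Dirichlet data, satisfies $|q|\le 1$ on $B_2$, and has Hessian $\operatorname{diag}(-\tfrac12,\tfrac12,\tfrac12)$, which is indefinite.

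\emph{Algebraic step.} For $M\in\Sthree$ with eigenvalues $\mu_i$, a matrix $A$ with $I\le A\le\Lambda I$ and $A:M=0$ exists iff $M=0$ or
\[
\sum_i \mu_i^+\le \Lambda\sum_i\mu_i^- \quad\text{and}\quad \sum_i\mu_i^-\le\Lambda\sum_i\mu_i^+ .
\]
To see this, diagonalize $M$ and choose diagonal weights in $[1,\Lambda]$. The same choice can be made smoothly wherever $M$ stays a fixed distance inside this "balanced cone," by an explicit formula in $M$ or by smoothing with a partition of unity; wherever $D^2u=0$ one takes $A=I$. With this in hand, Theorem~\ref{thm:smooth} reduces to the following purely analytic task. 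Construct smooth $u_m$ with $u_m=q$ outside $Q_0$ and $\|u_m\|_{L^\infty}\le 1$, whose Hessians lie pointwise in the cone with $\Lambda=2^{81}$, uniformly in $m$, and with $\|Du_m\|_{L^1(B_1)}\to\infty$.

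\emph{Multiscale construction.} Krylov--Safonov forces the oscillation of $u$ on a cube of side $r$ to be at most about $r^\alpha$. A single high-frequency laminate therefore cannot work; the gradient mass must instead accumulate over many scales. The iteration I would use is as follows. Inside $Q_0$, replace $q$ by a piecewise-quadratic "corrector" $u_1$ whose Hessian switches between a few balanced indefinite matrices. The switching happens across layers of a laminate, with rank-one jumps in $D^2u$ across interfaces so that $Du$ stays continuous. Then $u_1$ agrees with $q$ near $\partial Q_0$, and on a fixed fraction of $Q_0$ it equals rescaled and rotated copies of $q$ on subcubes. Iterating $m$ times inside those subcubes gives $u_m$. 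The subcube sizes and the number of subcubes are chosen so that each level adds at least a fixed positive amount $c_0$ to $\int|Du|$ (equivalently, each level adds a fixed amount of variation of $u$), while the amplitude stays summable so that $\|u_m\|_\infty\le 1$. The three-dimensional freedom is what makes this possible: one has two positive directions against one negative, and one can rotate the roles of the axes at each level. That freedom lets the gradient increment at each level point in a direction uncorrelated with the previous ones, so the increments do not cancel in $L^1$. This self-similarity is the source of a fixed ellipticity constant, namely a product of a bounded number of per-step distortions such as $2^{81}$. Finally, mollify at the interfaces on a scale much smaller than the smallest cube. The cone condition is open, and mollification preserves convex combinations of matrices from a convex subcone, so smoothness costs only a constant factor in $\Lambda$.

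\emph{Main obstacle.} The hardest step is designing the one-level corrector. It must satisfy three requirements at once. First, it must glue to $q$ with $C^1$ matching on $\partial Q_0$. Second, every Hessian it uses, including the transitional ones near interfaces and near the boundary of each subcube, must stay uniformly inside the balanced cone. Third, it must produce a genuine, non-cancelling gain in $\int|Du|$, while reproducing $q$, up to affine terms, scaling and rotation, on the subcubes where the iteration continues. I would first try rank-one laminates in the $x_1$ direction combined with an axis permutation at each level. I would verify by direct computation that all Hessians appearing remain in a fixed cone, and I expect this computation is where the explicit constant $2^{81}$ emerges. Once one level is controlled, the $m$-fold iteration and the divergence $\|Du_m\|_{L^1(B_1)}\ge c_0 m\to\infty$ follow by bookkeeping.
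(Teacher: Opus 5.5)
\textbf{There is a genuine gap.} It sits in the step you defer as the ``main obstacle'': the one-level corrector is essentially the whole proof, and your plan lacks the two ideas that make it work.

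\emph{Amplification.} Suppose the level-$k$ active cubes have side $\rho^k$, total volume $\phi^k$, and Hessian $\mu^k$ times the original (up to sign and rotation). Then the gradient mass they carry is of order $(\phi\rho|\mu|)^k$, while the increments of $u$ are of order $(\rho^2|\mu|)^k$. Since $\phi\rho<1$ (and it is tiny for any localized laminate), you need $|\mu|>1/(\phi\rho)$, i.e.\ a large gain in the Hessian at every level. Copies of $q$ whose Hessian is only rotated, with factor comparable to one, give a summable and hence bounded gradient mass. A one-direction laminate per level cannot provide the gain. By interlacing, $M_0+t\,e\otimes e$ has the spectrum of a multiple of $\pm M_0$ only with factor $1$ (e.g.\ $M_0-2e_3\otimes e_3=\operatorname{diag}(-1,1,-1)$). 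The paper instead uses a nested three-step laminate $M_0\to M_1\to M_2\to M_3=-cM_0$, with rank-one increments $\pm(c+1)e_i\otimes e_i$ in the directions $e_3,e_1,e_2$ (Lemma~\ref{lem:cycle}). The cone constant $\Gamma(c)\approx 32\sqrt2\,c$ grows with $c$, and $c=2^{75}$ is chosen so that $\phi\kappa\sqrt c>1$. So $2^{81}$ is the price of the amplification, not a product of per-step distortions coming from self-similarity.

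\emph{Localization.} Each split must be taken in a direction in which the two transverse diagonal entries of the parent have opposite signs. Then the cutoff errors $f'D\eta$ and $fD^2\eta$, kept small by $d\ell^2\le\varepsilon_0|m_j|R_j^2$, leave an indefinite transverse $2\times2$ block, and interlacing gives eigenvalues $\ge aS/6$ and $\le -aS/6$ (Lemma~\ref{lem:split}). This is where the third dimension enters. Your proposed first try violates this condition: for $M_0=\operatorname{diag}(-1,1,1)$ the transverse entries for $e_1$ are $(1,1)$, and $M_0+d\,e_1\otimes e_1$ is positive definite as soon as $d>1$.

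\textbf{Two further steps are not valid as stated.}

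\emph{Mollification.} It cannot be justified by convexity, because $\K_\Gamma$ is not convex. For $0<a\le\Gamma-1$, both $\operatorname{diag}(a,-1,1)$ and $\operatorname{diag}(a,1,-1)$ lie in $\K_\Gamma$, but their average $\operatorname{diag}(a,0,0)$ does not. Transitions must be controlled explicitly, as in Lemma~\ref{lem:split}. Where $\eta=1$, the Hessian stays on the rank-one segment $M+t\,d\,e_i\otimes e_i$, $|t|\le1$, which is balanced because of the transverse saddle. Where $D\eta\neq0$, the non-rank-one errors are absorbed by the scale condition.

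\emph{The $L^1$ count.} $L^1$ norms of gradient increments do not add just because their directions are ``uncorrelated''. The paper's lower bound uses that on each $Q\in\Q_m$ one has $Du_m(x_Q+y)=s_mM_0y+b_Q$. The symmetry $y\mapsto -y$ removes the unknown $b_Q$ and gives $\int_Q|Du_m|\ge|s_m|\int_Q|y|\,dy$, hence $\|Du_m\|_{L^1(B_1)}\ge\frac1{16}(\phi\kappa\sqrt c)^m$. An additive count over levels can be made rigorous only through regions that are disjoint across generations, such as the untouched cubes $G_Q$.

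\textbf{Minor points.} The bound $\|u_m\|_{L^\infty}\le1$ follows from the maximum principle, not from summability of the increments. Setting $A=I$ where $D^2u=0$ is incompatible with a smooth degree-zero selection; in the paper the Hessians never vanish.
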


Thus, while uniform ellipticity gives universal interior H\"older regularity by the Krylov--Safonov theory, in dimension three it gives not even weak-$L^1$ control of the gradient; see Remark~\ref{rem:weakL1}.

\medskip
The construction in Theorem \ref{thm:smooth} is simple at the level of scales. 
Starting from
\[
 M_0=\operatorname{diag}(-1,1,1),
\]
three localized rank-one splittings produce, on a fixed fraction of a cube, the exact Hessian $-cM_0$, where $c>1$ is an appropriate fixed amplification parameter. 
One cycle therefore multiplies the Hessian by $c$ and reduces the spatial scale by $c^{-1/2}$.
Thus,
\[
 \text{size of }u:\quad c(c^{-1/2})^2\sim1,
 \qquad
 \text{size of }Du:\quad c(c^{-1/2})\sim\sqrt c.
\]
An additional fixed contraction makes the changes of $u$ summable, while the $L^1$ mass of the gradient grows geometrically. 
The key PDE point is that, throughout the localization, the Hessian keeps one quantitatively positive and one quantitatively negative eigenvalue. 
This allows every Hessian produced by the construction to be annihilated by a coefficient matrix in one fixed ellipticity class.

\medskip
The geometric part of the argument is laminate-like and is naturally related to convex integration. 
Rank-one splittings and their iteration are basic in convex integration; see, for example, \cite{MullerSverak99,MullerSverak03, AFSz}. 
The use of rank-one decompositions to produce counterexamples to $L^1$ estimates is especially close in spirit to Conti--Faraco--Maggi \cite{ContiFaracoMaggi}. 
Our construction is not a direct application of these results: the oscillating object is a scalar Hessian, the available rank-one directions are the second-order directions $e_i\otimes e_i$, and the localization must preserve a quantitative saddle condition to retain uniform ellipticity.

\medskip
We also record the measurable-coefficient limit. 
Approximation solutions for uniformly elliptic equations with measurable coefficients have a subtle theory in dimensions at least three; in particular, Nadirashvili \cite{Nadirashvili97} and Safonov \cite{Safonov99} constructed nonuniqueness phenomena for limits of smooth uniformly elliptic problems. 
Earlier differentiability questions for such equations were studied by Nadirashvili \cite{Nadirashvili86}; see also \cite{Nadirashvili11} for the relation between derivatives of fully nonlinear viscosity solutions and linear equations with measurable coefficients. 
We say here that $u$ is an $L^1$-approximation solution of \[A(x):D^2u=0\]
if there are smooth coefficient matrices $A_m$ with a common ellipticity bound and classical solutions $u_m$ of \[A_m(x):D^2u_m=0\] such that $u_m\to u$ uniformly and $A_m\to A$ in $L^1$. 
This is the approximation notion used in \cite{Nadirashvili11,NTV}; see also Jensen \cite{Jensen}. 
It is different from the $L^p$-viscosity framework for equations with measurable ingredients developed in \cite{CCKS}.

\begin{theorem}[Unbounded variation solutions]
\label{thm:limit}
The sequences in Theorem \ref{thm:smooth} may be chosen so that
\[
 u_m\longrightarrow u\quad\text{uniformly in }B_2,
 \qquad
 A_m\longrightarrow A\quad\text{in }L^1(B_2),
\]
where
\[
 I\leq A\leq2^{81}I\qquad\text{a.e. in }B_2
\]
and
\[
 u\notin \BV_{\rm loc}(B_1).
\]
In particular, $u$ is an $L^1$-approximation solution of \[A(x):D^2u=0\qquad\text{in }B_2.\]
\end{theorem}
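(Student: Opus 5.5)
The plan is to reuse the construction behind Theorem \ref{thm:smooth} without change, and to organize it as a single nested sequence rather than as independent examples. The construction proceeds in cycles. At stage $m$, $u_{m+1}$ is obtained from $u_m$ by a modification supported in finitely many disjoint small cubes, namely those where $D^2u_m$ equals a multiple of $-cM_0$ or of $M_0$. The new coefficients $A_{m+1}$ agree with $A_m$ outside the region modified at stage $m$. I will choose the sequence so that three quantitative properties hold.

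\textbf{(i) Uniform convergence of $u_m$.} I will arrange $\|u_{m+1}-u_m\|_{L^\infty}\le C\theta^m$ for a fixed $\theta<1$. This is where the extra fixed contraction enters: each cycle leaves the size of $u$ essentially unchanged by the scaling $c(c^{-1/2})^2\sim1$, and the contraction factor makes the increments geometrically small. Hence $u_m\to u$ uniformly, and $\|u\|_{L^\infty}\le1$ follows by passing to the limit. The Dirichlet data are fixed, so $u$ has boundary data $1-x_1^2/2$.

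\textbf{(ii) $L^1$ convergence of $A_m$.} Let $E_m$ denote the region where $A_{m+1}\ne A_m$. This is the union of the cubes modified at stage $m$, which form a fixed fraction of the previous active set, so $|E_m|\le C\kappa^m$ with $\kappa<1$. All $A_m$ satisfy $I\le A_m\le 2^{81}I$, so $\|A_{m+1}-A_m\|_{L^1}\le 2\cdot2^{81}\sqrt3\,|E_m|$ is summable. Thus $A_m$ is Cauchy in $L^1$, and a subsequence converges a.e. The ellipticity bound is a closed convex condition, so it passes to the a.e. limit $A$. Together with (i), this gives that $u$ is an $L^1$-approximation solution.

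\textbf{(iii) $u\notin\BV_{\rm loc}(B_1)$.} This is the main obstacle, because lower semicontinuity of total variation runs the wrong way: $\|Du_m\|_{L^1}\to\infty$ does not by itself force $|Du|(B_1)=\infty$. To get around this I will use a localized, scale-by-scale lower bound that survives the limit. At stage $k$ the active cubes $Q$ have side $\ell_k\sim c^{-k/2}$, and on a fixed fraction of each one $u_{k+1}$ has gradient oscillation of order $\sqrt c$ times that at stage $k$. I plan to test against a smooth vector field $\varphi$ with $|\varphi|\le1$ that is adapted to the stage-$k$ oscillation pattern: it is supported in the stage-$k$ cubes and follows the sign of $\partial_1u_{k+1}$ on the laminate layers. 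This gives
\[
\int u_{k+1}\,\operatorname{div}\varphi \ge c_0\,\|Du_{k+1}\|_{L^1(\text{stage-}k\text{ region})}\gtrsim \mu^k
\]
with $\mu>1$. Since $\|\operatorname{div}\varphi\|_{L^1}\lesssim \ell_k^{-1}\cdot(\text{volume of the region})$, the later corrections contribute at most
\[
\sum_{j>k}\|u_{j+1}-u_j\|_{L^\infty}\,\|\operatorname{div}\varphi\|_{L^1}.
\]
For this to be negligible, the contraction must be chosen strong enough that $\theta^k\ell_k^{-1}$ times the volume is $o(\mu^k)$. The alternative is to use the stronger fact that the later modifications live on even smaller cubes whose $u$-increments have mean essentially zero against $\operatorname{div}\varphi$. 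Summing the test over $k$ then shows that the distributional gradient of $u$ has infinite total variation on $Q_0\Subset B_1$, so $u\notin\BV_{\rm loc}(B_1)$.

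If the direct testing estimate is too lossy, the fallback is a one-dimensional slicing argument. I would show that on a positive-measure set of lines parallel to $e_1$ through $Q_0$, the restriction of $u$ has infinite pointwise variation, since at each stage it contains $\gtrsim\mu^k$ monotone oscillations of height $\gtrsim\theta^k$ with $\sum\mu^k\theta^k=\infty$. These oscillations persist in the limit because the later corrections are uniformly smaller than $\theta^k/4$. A BV function has a.e. slice of finite variation, so this again gives $u\notin\BV_{\rm loc}(B_1)$.
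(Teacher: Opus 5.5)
Your parts (i) and (ii) are essentially the paper's argument. The paper derives the geometric decay $|\Omega_{m+1}|\le\frac3{10}|\Omega_m|$ from the fact that all child cubes lie in the central $x_1$-slab of width $\gamma r$; you only assert a ``fixed fraction''.

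\textbf{The gap is in (iii).}

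\emph{The testing argument.} You require the later corrections to be $o(\mu^k)$ once the contraction is strong enough. The construction is exactly self-similar, so this cannot happen. On a stage-$k$ cube, $\|u-u_k\|_{L^\infty}\le\frac1{300}(1-\kappa^2)^{-1}|s_k|r_k^2$. A unit test field that captures $\int_Q|Du_k|\ge\frac18|s_k|r_k|Q|$ must vary on scale $r_k$, so $\|\operatorname{div}\varphi\|_{L^1(Q)}$ is of order $r_k^{-1}|Q|$. Hence your error bound is of order $|s_k|r_k|Q|$, the same order as the main term. In your notation, $\theta^k\ell_k^{-1}|\Omega_k|$ is comparable to $\mu^k$ for every $k$: the contraction parameter cancels.

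What remains is a fixed numerical comparison: $\frac1{300}$ times the constant in $\|\operatorname{div}\varphi\|_{L^1}$, against $\frac18$. This might be closable by explicit bookkeeping, but it is not the argument you give. Moreover, your two bounds are not available simultaneously. If $\varphi$ really follows the laminate layers of $u_{k+1}$, whose widths are of order $\delta^ir_k/\sqrt c$, then $\|\operatorname{div}\varphi\|_{L^1}$ exceeds $\ell_k^{-1}|\Omega_k|$ by a factor of order $\sqrt c/\delta^3$.

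\emph{The slicing fallback} fails as stated. A line parallel to $e_1$ that misses $\bigcap_k\overline{\Omega_k}$ misses some $\overline{\Omega_K}$. On it $u=u_K$, so $u$ has finite variation there. Nothing in the construction gives a positive-measure set of lines meeting every $\Omega_k$. Also, $\mu^k$ is the growth of $\|Du_k\|_{L^1}$, i.e.\ of the integral of the slice variations, not a per-line count of oscillations. With $\theta=\kappa^2$ and $\mu=\phi\kappa\sqrt c$ one has $\theta\mu=\phi\kappa^3\sqrt c<1$, so $\sum_k\mu^k\theta^k$ actually converges.

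\textbf{The missing idea is Lemma~\ref{lem:cycle}(iii).} Each cycle leaves an untouched cube $G_Q\Subset Q$ of side $\sigma r$, on a neighborhood of which $V_{Q,s}=0$. All descendants of $Q$ stay in the central $x_1$-slab, which is disjoint from $G_Q$. Hence $u=u_m$ near $G_Q$, and $D^2u=s_mM_0$ exactly there. The symmetrization argument of \eqref{eq:ycube} then applies to $u$ itself and gives $\int_{G_Q}|Du|\,dx\ge\frac{\sigma^4}{16}c^mr_m|Q|$, with no error term. Summing over $Q\in\Q_m$ bounds the total variation of $u$ near $Q_0$ below by $\frac{\sigma^4}{16}(\phi\kappa\sqrt c)^m$ for every $m$, which tends to infinity. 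This sidesteps the lower-semicontinuity obstruction you correctly identified: the bound is computed on regions the limit never modifies, rather than transferred from $u_m$ to $u$.
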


Since an $L^p$ bound on $Du$ with $p\geq1$ implies an $L^1$ bound on $B_1$, Theorem \ref{thm:smooth} rules out every ellipticity-only interior $W^{1,p}$ estimate, $p\geq1$, and every corresponding $BV$ estimate. By adding dummy variables and using block-diagonal coefficients, the failure of an interior $W^{1,1}$ estimate immediately extends to every dimension $n\geq3$.

\subsection*{Notations}
We use $|M|$ to denote the Hilbert--Schmidt norm of a matrix $M$ and $A:B=\tr(AB)$ for $A, B\in \Sthree$. All boxes below are axis-parallel. The numerical constants are chosen to keep the construction explicit and reasonably short. No claim is made that the final ellipticity ratio $2^{81}$ is optimal; the choices below are near-optimized within this particular three-split implementation while keeping simple rational parameters.

\subsection*{Organization of the paper}
The paper is organized as follows.
In Section \ref{sec:local split}, we give some preliminaries and the local splitting lemma (Lemma \ref{lem:split}).
Section \ref{sec:3-step} is devoted to the main mechanism of the construction, the three-step amplification (Lemma \ref{lem:cycle}).
The proof of Theorem \ref{thm:smooth} is given in Section \ref{sec:thm1}.
We give the proof of Theorem \ref{thm:limit} in Section \ref{sec:thm2}.

\subsection*{AI assistance} The main results of this paper were obtained through a series of chats with ChatGPT 5.6 Sol. The key strategies were obtained by ChatGPT. 
The authors reworked and rewrote the article entirely. 
All arguments have been checked and simplified by the authors.
We take full responsibility for its correctness and content.

\section{Balanced Hessians and a local split}\label{sec:local split}

For $M\in\Sthree$, let $\lambda_1(M), \lambda_2(M), \lambda_3(M)$ be its eigenvalues and 
\[
 \lambda_+(M)=\sum_{\lambda_i(M)>0}\lambda_i(M),
 \qquad
 \lambda_-(M)=\sum_{\lambda_i(M)<0}|\lambda_i(M)|.
\]
For $\Gamma\geq1$, set
\[
 \K_\Gamma
 =\left\{M\in\Sthree:\ \lambda_+(M),\lambda_-(M)>0,
 \quad \Gamma^{-1}\leq\frac{\lambda_+(M)}{\lambda_-(M)}\leq\Gamma\right\}.
\]
Thus, $\K_\Gamma$ consists of quantitatively balanced indefinite matrices.

\begin{lemma}\label{lem:criterion}
For non-zero $M\in\Sthree$ and $\Gamma\geq 1$, the following are equivalent:
\begin{enumerate}[label=(\roman*)]
\item $M\in\K_\Gamma$;
\item there exists $A\in\Sthree$ such that
\[
 I\leq A\leq\Gamma I,
 \qquad A:M=0.
\]
\end{enumerate}
\end{lemma}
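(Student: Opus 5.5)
The plan is to work in an eigenbasis of $M$ and reduce everything to diagonal entries. Write $M=Q\,\mathrm{diag}(\lambda_1,\lambda_2,\lambda_3)\,Q^T$ with $Q$ orthogonal, and let $v_i=Qe_i$ be the orthonormal eigenvectors. For any $A\in\Sthree$ we have
\[
A:M=\tr(AM)=\sum_{i=1}^3\lambda_i\,v_i^TAv_i=\sum_{i=1}^3 a_i\lambda_i,\qquad a_i:=v_i^TAv_i .
\]
If $I\leq A\leq\Gamma I$, then each $a_i\in[1,\Gamma]$. Conversely, any choice of $a_i\in[1,\Gamma]$ is realized by $A=Q\,\mathrm{diag}(a_1,a_2,a_3)\,Q^T$, which satisfies $I\leq A\leq \Gamma I$. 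So both implications reduce to statements about weighted sums of the eigenvalues.

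\emph{(ii)$\Rightarrow$(i).} Suppose $A:M=0$ with $I\leq A\leq\Gamma I$. Split the identity into positive and negative eigenvalues:
\[
\sum_{\lambda_i>0}a_i\lambda_i=\sum_{\lambda_i<0}a_i|\lambda_i| .
\]
Since $M\neq0$, at least one of $\lambda_\pm(M)$ is positive. If, say, $\lambda_-(M)=0$, the right side vanishes while the left side is at least $\lambda_+(M)>0$, a contradiction. The symmetric argument applies if $\lambda_+(M)=0$, so both are positive. Using $1\leq a_i\leq\Gamma$ on each side gives
\[
\lambda_+(M)\leq\sum_{\lambda_i>0}a_i\lambda_i=\sum_{\lambda_i<0}a_i|\lambda_i|\leq\Gamma\lambda_-(M).
\]
Symmetrically, $\lambda_-(M)\leq\Gamma\lambda_+(M)$. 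Hence $M\in\K_\Gamma$.

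\emph{(i)$\Rightarrow$(ii).} Given $M\in\K_\Gamma$, we look for $A=Q\,\mathrm{diag}(a_i)\,Q^T$ with $a_i=t$ on positive eigenvalues, $a_i=s$ on negative eigenvalues, and $a_i=1$ on zero eigenvalues. The condition $A:M=0$ becomes $t\lambda_+(M)=s\lambda_-(M)$. If $\lambda_+\geq\lambda_-$, take $t=1$ and $s=\lambda_+/\lambda_-$; this lies in $[1,\Gamma]$ by the definition of $\K_\Gamma$. Otherwise take $s=1$ and $t=\lambda_-/\lambda_+\in[1,\Gamma]$. Then $I\leq A\leq\Gamma I$ and $A:M=0$.

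There is no real obstacle here. The only points needing care are the use of $M\neq0$ to exclude the degenerate case $\lambda_+=0$ or $\lambda_-=0$, and the handling of zero eigenvalues, which is harmless since they contribute nothing to $A:M$.
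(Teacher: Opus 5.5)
Your proof is correct and follows essentially the same route as the paper: you diagonalize $M$, use that the diagonal entries of $A$ in the eigenbasis lie in $[1,\Gamma]$ to compare $\lambda_+(M)$ and $\lambda_-(M)$ up to the factor $\Gamma$, and for the converse you put weight $1$ on one sign class and the ratio $\lambda_+(M)/\lambda_-(M)$ (or its reciprocal) on the other. Your separate treatment of the case $\lambda_+(M)=0$ or $\lambda_-(M)=0$ is just a more explicit version of the paper's remark that $M\neq0$ forces both to be positive.
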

\begin{proof}
Without loss of generality, after an orthogonal change of basis, we may assume
\[
 M=\operatorname{diag}(\lambda_1,\lambda_2,\lambda_3).
\]

Suppose first that there exists \(A\in\Sthree\) such that
\[
 I\leq A\leq\Gamma I,
 \qquad A:M=0.
\]
Writing \(A=(a_{ij})\) in this basis, we have $1\leq a_{ii}\leq\Gamma$ for every \(i\). Since \(M\) is diagonal,
\[
 A:M=\sum_{i=1}^3a_{ii}\lambda_i,
\]
and hence
\[
 \lambda_+(M)-\Gamma\lambda_-(M)
 \leq A:M
 \leq\Gamma\lambda_+(M)-\lambda_-(M).
\]
Using \(A:M=0\), we obtain
\[
 \lambda_+(M)\leq\Gamma\lambda_-(M),
 \qquad
 \lambda_-(M)\leq\Gamma\lambda_+(M).
\]
Since \(M\neq0\), these inequalities imply
\(\lambda_+(M),\lambda_-(M)>0\). 
Therefore
\[
 \Gamma^{-1}
 \leq\frac{\lambda_+(M)}{\lambda_-(M)}
 \leq\Gamma,
\]
so \(M\in\K_\Gamma\).

Conversely, suppose that \(M\in\K_\Gamma\). If
\(\lambda_+(M)\geq\lambda_-(M)\), define
\[
 A=\operatorname{diag}(a_1,a_2,a_3),
 \qquad
 a_i=
 \begin{cases}
  1, \qquad& \lambda_i\geq0,\\[2mm]
  \dfrac{\lambda_+(M)}{\lambda_-(M)}, \qquad& \lambda_i<0.
 \end{cases}
\]
Then \(I\leq A\leq\Gamma I\), and
\[
 \begin{aligned}
 A:M
 &=
 \sum_{\lambda_i>0}\lambda_i
 +\frac{\lambda_+(M)}{\lambda_-(M)}
  \sum_{\lambda_i<0}\lambda_i\\
 &=\lambda_+(M)
 -\frac{\lambda_+(M)}{\lambda_-(M)}\lambda_-(M)
 =0.
 \end{aligned}
\]
If \(\lambda_+(M)<\lambda_-(M)\), define instead
\[
 a_i=
 \begin{cases}
  \dfrac{\lambda_-(M)}{\lambda_+(M)}, \qquad& \lambda_i>0,\\[2mm]
  1, \qquad& \lambda_i\leq0.
 \end{cases}
\]
Again \(I\leq A\leq\Gamma I\), and
\[
 A:M
 =
 \frac{\lambda_-(M)}{\lambda_+(M)}
 \sum_{\lambda_i>0}\lambda_i
 +\sum_{\lambda_i<0}\lambda_i
 =0.
\]
Thus the two conditions are equivalent.
\end{proof}
We need the coefficient field to depend smoothly on the Hessian. 
The following explicit version of the smooth selection lemma will be useful.
\begin{lemma}\label{lem:selection}
For every $1\leq \Gamma<\infty$, there are an open conic neighborhood $\mathcal U_\Gamma$ of $\K_\Gamma$ and a smooth, even, degree-zero map
\[
 \mathcal A:\mathcal U_\Gamma\longrightarrow\Sthree
\]
such that, for every $M\in\K_\Gamma$,
\begin{equation}\label{eq:selector}
 I\leq\mathcal A(M)\leq(\Gamma+1)I,
 \qquad
 \mathcal A(M):M=0.
\end{equation}
\end{lemma}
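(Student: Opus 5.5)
The plan is to build $\mathcal A$ by gluing local selections with a partition of unity on the unit sphere, and then to extend by homogeneity. This works because both requirements in \eqref{eq:selector} are convex in $A$ for fixed $M$: the constraint $A:M=0$ is linear and the bounds $I\leq A\leq(\Gamma+1)I$ are convex. Lemma~\ref{lem:criterion} gives pointwise existence with the better bound $\Gamma$, and the gap between $\Gamma$ and $\Gamma+1$ is the room needed for smoothness.

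\emph{Step 1 (compactness).} Let $\Sigma=\{M\in\Sthree:|M|=1\}$ and $\K_\Gamma^1=\K_\Gamma\cap\Sigma$. On $\Sigma$ the quantities $\lambda_\pm$ are continuous. Since $\lambda_+(M)+\lambda_-(M)\geq|M|=1$, the two-sided ratio condition forces $\lambda_\pm\geq(1+\Gamma)^{-1}$. Hence the strict positivity in the definition of $\K_\Gamma$ is automatic there, so $\K_\Gamma^1$ is closed and therefore compact. It is also invariant under $M\mapsto-M$, because negation swaps $\lambda_+$ and $\lambda_-$.

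\emph{Step 2 (local selections with slack).} Fix $M_0\in\K_\Gamma^1$. By Lemma~\ref{lem:criterion} there is $A_0$ with $I\leq A_0\leq\Gamma I$ and $A_0:M_0=0$. Set $B_0=(1+\tfrac1{2\Gamma})A_0$. Its spectrum lies in $[1+\tfrac1{2\Gamma},\,\Gamma+\tfrac12]$, strictly inside $(1,\Gamma+1)$. Define, for $M\neq0$,
\[
 A_{M_0}(M)=B_0-\frac{B_0:M}{|M|^2}\,M .
\]
This map is smooth, satisfies $A_{M_0}(M):M=0$ identically, and equals $B_0$ at $M_0$. Hence there is an open neighborhood $V_{M_0}\subset\Sigma$ of $M_0$ on which $I<A_{M_0}(M)<(\Gamma+1)I$.

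\emph{Step 3 (gluing, symmetrization, extension).} Cover $\K_\Gamma^1$ by finitely many such sets $V_1,\dots,V_N$, let $V=\bigcup V_j\cup(-V_j)$, and choose a smooth partition of unity $\{\chi_j\}$ on $\bigcup V_j$ subordinate to the $V_j$. Put $\tilde{\mathcal A}=\sum_j\chi_jA_{M_j}$. At each point this is a convex combination of matrices annihilating $M$ with spectra in $(1,\Gamma+1)$, so it has the same two properties. To make the map even, set $\mathcal A(M)=\tfrac12\bigl(\tilde{\mathcal A}(M)+\tilde{\mathcal A}(-M)\bigr)$ on $W=\bigcup V_j\cap\bigcup(-V_j)$. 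This is an open neighborhood of $\K_\Gamma^1$ in $\Sigma$ by the symmetry from Step 1. The averaging preserves both properties, since $\tilde{\mathcal A}(-M):M=-\tilde{\mathcal A}(-M):(-M)=0$. Finally take $\mathcal U_\Gamma=\{tM:t>0,\ M\in W\}$ and $\mathcal A(tM)=\mathcal A(M)$. The result is open, conic, contains $\K_\Gamma$ (which is a cone), and $\mathcal A$ is smooth, even, and homogeneous of degree zero. In fact we get \eqref{eq:selector} with strict inequalities on all of $\mathcal U_\Gamma$, not only on $\K_\Gamma$.

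I expect no serious obstacle. The one delicate point is ensuring slack in both the lower bound $I$ and the upper bound $\Gamma+1$ simultaneously after perturbing $A_0$ to enforce the exact linear constraint. The scaling $B_0=(1+\tfrac1{2\Gamma})A_0$ together with the projection formula in Step 2 handles this. The other thing to check is compactness of $\K_\Gamma^1$ in Step 1, which is what allows a finite cover.
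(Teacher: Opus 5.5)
Your proposal is correct and follows essentially the same route as the paper: scale the matrix from Lemma~\ref{lem:criterion} to create slack, project it via $B-\frac{B:M}{|M|^2}M$, cover the compact slice $\K_\Gamma\cap\{|M|=1\}$ by finitely many sets, glue by a partition of unity (using convexity of the constraints), and extend along rays. The differences are cosmetic. You obtain evenness by averaging $\tilde{\mathcal A}(M)$ with $\tilde{\mathcal A}(-M)$ rather than using an antipodally symmetric cover with an even partition of unity. You get the slack from continuity rather than an explicit operator-norm bound. You also justify the compactness of the unit-sphere slice, which the paper only asserts.
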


\begin{proof}
It is enough to work on the compact set
\[
 \Sigma_\Gamma=\{M\in\K_\Gamma:|M|=1\}.
\]
For $M_*\in\Sigma_\Gamma$, choose $A_*\in \Sthree$ from Lemma \ref{lem:criterion}. Put
\[
 \tau=\frac1{\Gamma+1},
 \qquad
 B_*=(1+\tau)A_*,
\]
and, for $M\in\Sthree, M\neq0$, define
\[
 B_{M_*}(M)=B_*-\frac{B_*:M}{|M|^2}M.
\]
Then, $B_{M_*}(M):M=0$, and the map is even and homogeneous of degree zero. 
Since $B_*:M_*=0$, after shrinking to a conic neighborhood of the antipodal pair $\{M_*,-M_*\}$ we may assume that the correction term has operator norm less than $\tau$. 
Hence, in this neighborhood,
\[
 I\leq B_{M_*}(M)\leq(\Gamma+1)I.
\]
A finite antipodally symmetric cover of $\Sigma_\Gamma$ and a smooth even partition of unity give $\mathcal A$ by convex combination. 
Then, extending along rays gives the desired conic neighborhood.
\end{proof}

\subsection*{Cell and cutoff} 
We next fix the one-dimensional cell and the transverse cutoff. 

Let $\rho\in C_c^\infty((-1,1))$ be nonnegative, even, and satisfy $\int_{-1}^1\rho(t)\,dt=1$.
For $h>0$, denote by $\rho_h(t)=h^{-1}\rho(t/h)$. 
Define the step function $g_0$, which is even about $1/2$,
\[
 g_0(t)=
 \begin{cases}
 1,\qquad&\displaystyle \frac1{256}<t<\frac{129}{512}
       \ \text{or}\ \frac{383}{512}<t<\frac{255}{256},\\[2mm]
 -1,\qquad&\displaystyle \frac{129}{512}<t<\frac{383}{512},\\[2mm]
 0,\qquad&\text{otherwise},
 \end{cases}
\]
extended by zero to $\R$, and set
\[
 g=\rho_{1/1024}*g_0.
\]
Then, $g\in C_c^\infty((0,1))$, $|g|\leq1$, and symmetry together with the equality of the positive and negative masses gives
\begin{equation}\label{eq:moments}
 \int_0^1g(t)\,dt=\int_0^1t g(t)\,dt=0.
\end{equation}
Moreover, $g=-1$ on a central interval about $1/2$ of length $253/512$. We retain a slightly smaller interval and fix
\begin{equation}\label{eq:beta}
 \beta=\frac{63}{128}.
\end{equation}
Define
\[
 F(t)=\int_0^t(t-s)g(s)\,ds.
\]
By \eqref{eq:moments}, $F\in C_c^\infty((0,1))$, and
\[
 \|F''\|_{L^\infty}\leq1,
 \qquad
 \|F'\|_{L^\infty}\leq1,
 \qquad
 \|F\|_{L^\infty}\leq1.
\]
Thus, for an interval \(I=(a,a+\ell)\) of length \(\ell\) and
\(d>0\), define
\[
 f(x)=d\ell^2F\left(\frac{x-a}{\ell}\right).
\]
The resulting cell $f$ satisfies
\begin{equation}\label{eq:cellbounds}
 |f''|\leq d,
 \qquad
 |f'|\leq d\ell,
 \qquad
 |f|\leq d\ell^2,
\end{equation}
and $f''=-d$ on a subinterval of length $\beta\ell$. 
Replacing $f$ by $-f$ gives the opposite sign. 
The cells are flat near their endpoints and may therefore be concatenated smoothly.

\medskip
For the transverse cutoff, let
\[
 h(s)=1-3s^2+2s^3\qquad(0\leq s\leq1),
\]
and
\[
 a=\frac5{32}+\frac{1}{1024},
 \qquad
 b=\frac{15}{32}-\frac{1}{1024}.
\]
Define an even $C^1$ function $\chi_0$ by $\chi_0=1$ on $[-a,a]$, $\chi_0=0$ outside $(-b,b)$, and on $a<|t|<b$ set
\[
 \chi_0(t)=h\left(\frac{|t|-a}{b-a}\right).
\]
Let $\chi=\rho_{1/1024}*\chi_0$.
Then, $\chi\in C_c^\infty((-1/2,1/2))$,
\begin{equation}\label{eq:cutoff}
 0\leq\chi\leq1,
 \qquad
 \chi=1\quad\text{on }\left[-\frac5{32},\frac5{32}\right],
 \qquad
 \supp\chi\subset\left(-\frac{15}{32},\frac{15}{32}\right),
\end{equation}
and, since $b-a=159/512$,
\begin{equation}\label{eq:cutoffder}
 \|\chi'\|_{L^\infty}\leq5,
 \qquad
 \|\chi''\|_{L^\infty}\leq63.
\end{equation}
Inside the region where $\chi=1$, we retain the fixed central fraction
\begin{equation}\label{eq:gamma}
\gamma=\frac3{10}.
\end{equation}

\begin{lemma}[Local splitting]\label{lem:split}
Let
\[
 Q=I_i\times I_j\times I_k,
 \qquad
 M=\operatorname{diag}(m_i,m_j,m_k),
 \qquad m_jm_k<0.
\]
Let $R_i,R_j,R_k$ be the three side lengths, with $R_i$ corresponding to the splitting direction. 
Assume, for some $S,a,d, L>0$, that
\begin{equation}\label{eq:splitass}
 \min(|m_j|,|m_k|)\geq aS,
 \qquad
 |M|+d\leq LS.
\end{equation}
Set
\begin{equation}\label{eq:eps0}
\varepsilon_0=\frac5{378}.
\end{equation}
If
\begin{equation}\label{eq:scalesplit}
d\ell^2\leq\min(\varepsilon_0|m_j|R_j^2,\varepsilon_0|m_k|R_k^2),
 \qquad
 16\ell \leq R_i,
\end{equation}
then, for either choice of sign, there exists $w\in C_c^\infty(Q)$ such that
\begin{enumerate}[label=(\roman*)]
\item
\begin{equation}\label{eq:localsplitcone}
 M+D^2w\in\K_{16\sqrt2\,L/a}\qquad\text{in }Q;
\end{equation}
\item on a union of rectangular boxes of total volume at least
\begin{equation}\label{eq:theta}
\theta|Q|,\qquad
 \theta=\frac{15}{16}\,\beta\gamma^2
       =\frac{1701}{40960},
\end{equation}
one has
\[
 M+D^2w=M\pm d\,e_i\otimes e_i;
\]
\item
\[
 \|w\|_{L^\infty(Q)}\leq d\ell^2.
\]
\end{enumerate}
The exact boxes have fixed positive margins inside regions on which the Hessian is exactly constant. Their two transverse side lengths are $\gamma R_j$ and $\gamma R_k$, and their normal side length is $\beta\ell$.
\end{lemma}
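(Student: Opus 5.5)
Write coordinates $x=(x_i,x_j,x_k)$ with $Q=I_i\times I_j\times I_k$, centers $c_j,c_k$ of $I_j,I_k$, and the sign $\sigma=\pm$ fixed. I propose the ansatz
\[
 w(x)=\phi(x_i)\,\chi\!\left(\frac{x_j-c_j}{R_j}\right)\chi\!\left(\frac{x_k-c_k}{R_k}\right),
\]
where $\phi$ is a concatenation of cells $f=-\sigma d\ell^2F((x_i-a)/\ell)$ placed on consecutive intervals of length $\ell$ inside $I_i$. The cells are flat near their endpoints, so $\phi$ is smooth and compactly supported in $I_i$. With $16\ell\le R_i$ we can fit $N=\lfloor R_i/\ell\rfloor-1\ge 15$ cells, leaving a margin at each end. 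Since $N\ell\ge R_i-2\ell\ge \tfrac{7}{8}R_i$, and the margin can be kept to $\ell$ by centering, the covered fraction is at least $\tfrac{15}{16}$. Because $\chi$ is supported in $(-15/32,15/32)$, $w\in C_c^\infty(Q)$. Item (iii) follows at once from $|f|\le d\ell^2$ in \eqref{eq:cellbounds} and $0\le\chi\le1$.

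For item (ii), take the subinterval of each cell where $f''=\sigma d$ and cross it with the central boxes of relative width $\gamma=3/10$ in the $j,k$ directions. Since $\gamma<2\cdot 5/32$, these boxes lie inside $\{\chi=1\}$. On this set $D^2w=\sigma d\,e_i\otimes e_i$ exactly, because every derivative of $\chi$ vanishes there. The boxes have sides $\beta\ell$, $\gamma R_j$, $\gamma R_k$. Their total volume is $N\beta\ell\,\gamma^2R_jR_k\ge\tfrac{15}{16}\beta\gamma^2|Q|=\theta|Q|$. The margins hold because $f''$ is constant on an interval of length $253/512>\beta$, and $\chi=1$ on a set larger than the retained width.

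The main work is item (i), the cone condition, and I would prove it by a perturbation estimate. Write $\chi_j,\chi_k$ for the rescaled cutoffs. The Hessian $D^2w$ has the following entries:
\begin{itemize}
\item the $ii$ entry $\phi''\chi_j\chi_k$, of size at most $d$;
\item mixed entries $\phi'\chi_j'\chi_k/R_j$, of size at most $5d\ell/R_j$;
\item the entry $\phi\chi_j''\chi_k/R_j^2$, of size at most $63\,d\ell^2/R_j^2\le 63\varepsilon_0|m_j|$;
\item the $jk$ entry, of size at most $25d\ell^2/(R_jR_k)\le 25\varepsilon_0\sqrt{|m_jm_k|}$.
\end{itemize}
The mixed $ij$ and $ik$ entries are controlled through $(d\ell/R_j)^2\le d\cdot d\ell^2/R_j^2\le \varepsilon_0 d|m_j|$. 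The key point is that none of the off-$e_i$ perturbations is compared to $S$ alone. Each is compared to $|m_j|$ or $|m_k|$ themselves, or to their geometric mean, with the small factor $\varepsilon_0=5/378$.

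I would then show that the $2\times2$ block in the $(j,k)$ directions stays strictly indefinite, with eigenvalues comparable to $m_j$ and $m_k$. Concretely, one eigenvalue is $\ge c\,aS$ and the other is $\le -c\,aS$, and the coupling through $e_i$ is absorbed by a Schur-complement or Cauchy interlacing argument. By interlacing, $M+D^2w$ then has one eigenvalue at least of order $aS$ and one at most of order $-aS$. This gives $\lambda_\pm\gtrsim aS$ on the full matrix. On the other side, $\lambda_\pm\le \sqrt2\,|M+D^2w|\lesssim LS$, using $|M|+d\le LS$ together with the bounds above. Dividing gives the ratio bound $16\sqrt2\,L/a$.

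The obstacle is bookkeeping the constants so the factor comes out as exactly $16$. I would first check, with the number $\varepsilon_0=5/378$, that the $(j,k)$ block perturbation is at most $\tfrac14$ of $\min(|m_j|,|m_k|)$. This comes from the relative size $63\varepsilon_0+25\varepsilon_0\le 1/4$ plus the mixed terms. I would then track the interlacing loss, tuning the decomposition by first diagonalizing the $2\times2$ block and then treating the $e_i$ row as a rank-two perturbation.
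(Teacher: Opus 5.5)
Your ansatz $w=\phi(x_i)\,\eta(x_j,x_k)$ is the paper's, and your arguments for (ii) and (iii) are essentially the paper's. One small slip in (ii): with $N=\lfloor R_i/\ell\rfloor-1$ cells you only get $N\ell\ge R_i-2\ell\ge\frac78R_i$, and centering does not change the covered length. Each cell is already compactly supported in its own open subinterval, so you may take $N=\lfloor R_i/\ell\rfloor$, which gives $N\ell\ge R_i-\ell\ge\frac{15}{16}R_i$.

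The genuine gap is in (i), in the lower bound for $\lambda_\pm$, which is where the constant $16\sqrt2$ comes from. You plan to show that the perturbation of the $(j,k)$ block is at most $\frac14\min(|m_j|,|m_k|)$, citing $63\varepsilon_0+25\varepsilon_0\le\frac14$. That inequality is false: $\varepsilon_0$ is chosen exactly so that $63\varepsilon_0=\frac56$, and $88\varepsilon_0>1$.

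Adjusting constants would not rescue it either. The hypotheses only give $|fD_{kk}\eta|\le\frac56|m_k|$ and $|fD_{jk}\eta|\le 25\varepsilon_0\sqrt{|m_jm_k|}$. Both can be far larger than $\min(|m_j|,|m_k|)$. This is precisely the second stage of Lemma~\ref{lem:cycle}, where the transverse entries are proportional to $(1,-c)$ with $c=2^{75}$. So no Weyl-type or Schur-complement perturbation around $\operatorname{diag}(m_j,m_k)$ can give a uniform lower bound.

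The missing observation is that off-diagonal entries never hurt the lower bound. For any symmetric $X$, the Rayleigh quotient at $e_j$ and $e_k$ gives $\lambda_{\max}(X)\ge X_{jj}$ and $\lambda_{\min}(X)\le X_{kk}$. Since $1-63\varepsilon_0=\frac16$, the diagonal entries $m_j+fD_{jj}\eta$ and $m_k+fD_{kk}\eta$ keep the signs of $m_j$ and $m_k$, with moduli at least $\frac16|m_j|$ and $\frac16|m_k|$. Hence $\lambda_\pm(M+D^2w)\ge aS/6$, regardless of the $jk$ entry and the couplings to $e_i$; this is what the paper's interlacing step uses.

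You then need the explicit bound $|M+D^2w|\le\frac83LS$, not merely $\lesssim LS$. It follows from $(\eta f'')^2\le d^2$, $|f'D\eta|^2\le 25\varepsilon_0\, d(|m_j|+|m_k|)$ (counted twice in the Hilbert--Schmidt norm), and $|fD^2\eta|^2\le\frac{29}{36}(m_j^2+m_k^2)$, combined with $|M|+d\le LS$. Dividing $\sqrt2\cdot\frac83LS$ by $\frac{aS}6$ gives exactly $16\sqrt2\,L/a$.
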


\begin{proof}
Let $x_j^0,x_k^0$ be the centers of the two transverse intervals and define
\[
 \eta(x_j,x_k)
 =\chi\left(\frac{x_j-x_j^0}{R_j}\right)
  \chi\left(\frac{x_k-x_k^0}{R_k}\right).
\]
Concatenate the one-dimensional cells of length $\ell$ in the $i$-direction and call the resulting function $f=f(x_i)$. Put $w=\eta f$. In the decomposition $\R e_i\oplus e_i^\perp$,
\begin{equation}\label{eq:block}
 M+D^2w=
 \begin{pmatrix}
 m_i+\eta f'' & f'(D\eta)^T\\
 f'D\eta & \operatorname{diag}(m_j,m_k)+fD^2\eta
 \end{pmatrix}.
\end{equation}
By \eqref{eq:cutoffder},
\[
 |D_{jj}\eta|\leq\frac{63}{R_j^2},\qquad
 |D_{kk}\eta|\leq\frac{63}{R_k^2},\qquad
 |D_{jk}\eta|\leq\frac{25}{R_jR_k}.
\]
Since $63\varepsilon_0=5/6$, \eqref{eq:cellbounds} and \eqref{eq:scalesplit} show that the two diagonal entries of the lower-right block in \eqref{eq:block} keep their opposite signs and have absolute values at least $|m_j|/6$ and $|m_k|/6$. 
Hence, this $2\times2$ block is indefinite. 
The Cauchy interlacing theorem gives
\begin{equation}\label{eq:eigenlower}
 \lambda_{\max}(M+D^2w)\geq\frac{aS}{6},
 \qquad
 \lambda_{\min}(M+D^2w)\leq-\frac{aS}{6}.
\end{equation}
We next give the upper bound explicitly. 
Let $v=f'D\eta$ and $q=fD^2\eta$. 
Since $25\varepsilon_0=125/378<1/3$, the scale assumptions give
\[
 |v|^2
 \leq25\varepsilon_0d(|m_j|+|m_k|)
 <\frac{\sqrt2}{3}L^2S^2.
\]
Moreover,
\[
 |q|^2
 \leq\left(\frac{25}{36}+\frac19\right)(m_j^2+m_k^2)
 =\frac{29}{36}(m_j^2+m_k^2)
 \leq\frac{29}{36}L^2S^2.
\]
Therefore,
\[
 |D^2w|^2
 \leq\left(1+\frac{2\sqrt2}{3}+\frac{29}{36}\right)L^2S^2
 <\frac{25}{9}L^2S^2,
\]
and hence
\begin{equation}\label{eq:Hupper}
 |M+D^2w|\leq\frac83LS.
\end{equation}

Since $M+D^2w$ is indefinite in dimension three, it has at most two positive and at most two negative eigenvalues. Hence, by the Cauchy--Schwarz inequality,
\[
\lambda_+(M+D^2w)\le \sqrt{2}\,|M+D^2w|,
\qquad
\lambda_-(M+D^2w)\le \sqrt{2}\,|M+D^2w|.
\]

It follows from \eqref{eq:eigenlower} that
\[
\lambda_+(M+D^2w),\lambda_-(M+D^2w)
\geq \frac{aS}{6}.
\]
On the other hand, by \eqref{eq:Hupper} and the preceding
Cauchy--Schwarz estimates,
\[
\lambda_+(M+D^2w),\lambda_-(M+D^2w)
\leq \sqrt{2}\,|M+D^2w|
\leq \frac{8\sqrt{2}}{3}LS.
\]
Therefore,
\[
\frac{\lambda_+(M+D^2w)}
     {\lambda_-(M+D^2w)}
\leq
\frac{(8\sqrt{2}/3)LS}{aS/6}
=
\frac{16\sqrt{2}\,L}{a},
\]
and similarly,
\[
\frac{\lambda_-(M+D^2w)}
     {\lambda_+(M+D^2w)}
\leq
\frac{16\sqrt{2}\,L}{a}.
\]
Hence
\[
M+D^2w\in \mathcal K_{16\sqrt2\,L/a},
\]
which proves~(i).

On the region where $\eta=1$ and $f''=\pm d$, the Hessian is exactly $M\pm d e_i\otimes e_i$. 
Since $R_i\geq16\ell$, complete cells fill at least $15/16$ of the normal interval. 
In every complete cell we retain the central plateau of relative length $\beta$, and in each transverse direction we retain the central fraction $\gamma$. 
This gives exactly the volume fraction in \eqref{eq:theta}. 
Finally, $|w|\leq|f|\leq d\ell^2$. The lemma is proved.
\end{proof}

\section{The three-step amplification}\label{sec:3-step}

Fix $c>1$ and set
\[
 M_0=\operatorname{diag}(-1,1,1).
\]
Introduce
\[
 \begin{aligned}
 M_1&=\operatorname{diag}(-1,1,-c),\qquad
 &\widetilde M_1&=\operatorname{diag}(-1,1,c+2),\\
 M_2&=\operatorname{diag}(c,1,-c),\qquad
 &\widetilde M_2&=\operatorname{diag}(-c-2,1,-c),\\
 M_3&=\operatorname{diag}(c,-c,-c),\qquad
 &\widetilde M_3&=\operatorname{diag}(c,c+2,-c).
 \end{aligned}
\]
Then,
\begin{equation}\label{eq:threeav}
 M_0=\frac{M_1+\widetilde M_1}{2},
 \qquad
 M_1=\frac{M_2+\widetilde M_2}{2},
 \qquad
 M_2=\frac{M_3+\widetilde M_3}{2},
\end{equation}
and, crucially,
\begin{equation}\label{eq:amp}
 M_3=-cM_0.
\end{equation}
The three active increments are
\begin{equation}\label{eq:increments}
 -(c+1)e_3\otimes e_3,
 \qquad
 (c+1)e_1\otimes e_1,
 \qquad
 -(c+1)e_2\otimes e_2.
\end{equation}
Thus, all three splittings use the same symmetric one-dimensional cell. 
In the splitting direction, the two transverse entries of the parent matrix always have opposite signs:
\[
 (-1,1),\qquad (1,-c),\qquad (c,-c).
\]
This is the reason localization preserves uniform ellipticity.

We now fix the geometric parameter
\begin{equation}\label{eq:delta}
\delta=\frac1{25}.
\end{equation}
For a cube of side $r$, we use the three cell lengths
\begin{equation}\label{eq:ell123}
 \ell_1=\frac{\delta r}{\sqrt c},
 \qquad
 \ell_2=\frac{\delta^2r}{\sqrt c},
 \qquad
 \ell_3=\frac{\delta^3r}{\sqrt c}.
\end{equation}
We also set
\begin{equation}\label{eq:kappaphi}
\kappa=\beta\delta^3=\frac{63}{2,000,000},
 \qquad
 \phi=\theta^3\frac{\delta^3}{\gamma^3}
 =\frac{3^{12}7^3}{2^{36}5^6}.
\end{equation}
Finally put
\begin{equation}\label{eq:sigma}
\sigma=\frac1{64}.
\end{equation}

\begin{lemma}[One amplification cycle]\label{lem:cycle}
For every cube $Q$ of side $r$, every $s\neq0$, and every $c\geq1$, there exists $V_{Q,s}\in C_c^\infty(Q)$ with the following properties. 
Set
\[
 \rho=\frac{\kappa}{\sqrt c}
\]
and
\begin{equation}\label{eq:Gammac}
 \Gamma(c)=16\sqrt2\bigl(\sqrt{c^2+2}+c+1\bigr).
\end{equation}
Then:
\begin{enumerate}[label=(\roman*)]
\item
\[
 sM_0+D^2V_{Q,s}\in\K_{\Gamma(c)}\qquad\text{in }Q;
\]
\item there are pairwise disjoint cubes $Q_\nu\Subset Q$, all of side $\rho r$, such that
\[
 \sum_\nu|Q_\nu|\geq\phi|Q|
\]
and, on a neighborhood of every $Q_\nu$,
\[
 sM_0+D^2V_{Q,s}=-csM_0;
\]
\item there is a cube $G_Q\Subset Q$ of side $\sigma r$ on a neighborhood of which $V_{Q,s}=0$. 
All child cubes $Q_\nu$ lie in the central $x_1$-slab of width $\gamma r=3r/10$, which is disjoint from $G_Q$;
\item
\begin{equation}\label{eq:Vbound}
 \|V_{Q,s}\|_{L^\infty(Q)}\leq\frac1{300}|s|r^2.
\end{equation}
\end{enumerate}
\end{lemma}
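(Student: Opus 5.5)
By homogeneity it is enough to treat $s=1$. The cone $\K_\Gamma$ is invariant under multiplication by any nonzero scalar, since a negative factor only swaps $\lambda_+$ and $\lambda_-$. So if $V_{Q,1}$ works, then $V_{Q,s}=sV_{Q,1}$ works: (i) and (ii) transfer because $sM_0+D^2V_{Q,s}=s(M_0+D^2V_{Q,1})$, and (iii) and (iv) scale trivially. For $s=1$ I will build $V_{Q,1}=w_1+\sum w_2+\sum w_3$ by applying Lemma~\ref{lem:split} three times in a nested way, following the averages \eqref{eq:threeav} and the increments \eqref{eq:increments}.

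\textbf{Step 1.} On $Q$ itself, with $M=M_0$, split in direction $e_3$ with sign $-$, $d=c+1$ and $\ell=\ell_1$. The transverse entries are $(-1,1)$, so \eqref{eq:splitass} holds with $S=1$, $a=1$ and $L=\sqrt3+c+1\le\sqrt{c^2+2}+c+1$. Hence the cone parameter is at most $\Gamma(c)$.

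\textbf{Step 2.} On each exact box from Step 1 the Hessian is $M_1$. These boxes have sides $\gamma r,\gamma r$ in $e_1,e_2$ and $\beta\ell_1$ in $e_3$. On a box slightly shrunk inside its margin, split in direction $e_1$ with sign $+$, $d=c+1$ and $\ell=\ell_2$. The transverse entries are $(1,-c)$, so $S=1$, $a=1$ and $L=|M_1|+c+1=\sqrt{c^2+2}+c+1$, which is exactly where $\Gamma(c)$ comes from.

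\textbf{Step 3.} On each exact box from Step 2 the Hessian is $M_2$. Split in direction $e_2$ with sign $-$ and $\ell=\ell_3$. The transverse entries are $(c,-c)$, so $S=c$, $a=1$ and $L=(\sqrt{2c^2+1}+c+1)/c\le\Gamma(c)/(16\sqrt2)$.

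\textbf{Gluing and properties (i), (ii).} Each later $w$ is compactly supported inside a region where the previous Hessian is exactly constant. Therefore the total Hessian at each point equals the Hessian produced by the innermost active splitting, and (i) follows from \eqref{eq:localsplitcone} at each stage. On the final exact boxes the Hessian is $M_3=-cM_0$ by \eqref{eq:amp}. These boxes have normal side $\beta\ell_3=\kappa r/\sqrt c=\rho r$ in $e_2$ and longer sides in the other directions. I tile each of them by cubes of side $\rho r$ and keep the margins, so the identity holds on a neighbourhood of every cube. The volume bookkeeping uses $\theta^3$ from three applications of \eqref{eq:theta}, together with the loss from tiling the long sides by cubes of side $\rho r$. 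I expect this to give the fraction $\phi$ as in \eqref{eq:kappaphi}, though I have not checked the constant.

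\textbf{Scale conditions.} The step I expect to be the main obstacle is verifying \eqref{eq:scalesplit} at Steps 2 and 3, where one transverse side is the thin normal side $\beta\ell$ of the previous stage. At Step 2 the binding inequality is
\[
(c+1)\ell_2^2\le\varepsilon_0\,c\,(\beta\ell_1)^2,
\quad\text{that is,}\quad
\frac{c+1}{c}\,\delta^2\le\varepsilon_0\beta^2 .
\]
Using $(c+1)/c\le2$, this is the numerical check $2/625\le\tfrac{5}{378}(63/128)^2$. Step 3 gives the analogous inequality, with the thin side $\beta\ell_2$ now in direction $e_3$ and $|m|=c$. The conditions $16\ell\le R_i$ are easy because $\delta=1/25$, and the non-thin transverse sides $\gamma r$ give much weaker constraints. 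I will verify these inequalities carefully, since $\delta$, $\beta$ and $\varepsilon_0$ are tuned precisely so that they hold.

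\textbf{Properties (iii), (iv).} For (iii), Step 1 uses the cutoff $\chi$ in $x_1$, which vanishes outside the central $15r/16$ in that direction by \eqref{eq:cutoff}. This leaves a slab of width $r/32=2\sigma r$ next to a face, and all later $w$'s live inside Step 1 exact boxes. So a cube of side $\sigma r$ placed in that slab sees $V=0$ on a neighbourhood. The exact boxes lie in the central $x_1$-fraction $\gamma$ of the $\chi=1$ region, giving the slab of width $3r/10$, which is disjoint from $G_Q$. For (iv), Lemma~\ref{lem:split}(iii) gives
\[
|V|\le(c+1)\,\frac{\delta^2+\delta^4+\delta^6}{c}\,r^2\le2\delta^2(1+\delta^2+\delta^4)\,r^2<\frac{r^2}{300}.
\]
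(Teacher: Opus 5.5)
Your proposal is correct and follows the paper's proof. It uses the same three nested applications of Lemma~\ref{lem:split} in the directions $e_3,e_1,e_2$ with $d=(c+1)|s|$ and cell lengths $\ell_1,\ell_2,\ell_3$, the same binding scale check $2\delta^2\le\varepsilon_0\beta^2$, the same $x_1$-collar of width $r/32$ for $G_Q$, and the same level-by-level sum for the $L^\infty$ bound. Your reduction to $s=1$ via the scaling invariance of $\K_\Gamma$ is only a cosmetic variant of the paper's sign choice with $S=|s|$.

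On the constant you left unchecked: the paper keeps just one cube of side $\beta\ell_3$ in each final rectangle of sides $(\gamma\beta\ell_2,\beta\ell_3,\gamma^2\beta\ell_1)$. That is exactly the factor $\delta^3/\gamma^3$ in $\phi=\theta^3\delta^3/\gamma^3$, so your tiling gives strictly more than $\phi|Q|$. Shrinking the boxes is unnecessary, since each perturbation is already compactly supported in the exact box. It also eats into the binding inequality, which holds with only about $0.14\%$ to spare.
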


\begin{proof}
Let $S=|s|$ and $d=(c+1)S$. 
Apply Lemma \ref{lem:split} successively in the directions $e_3,e_1,e_2$, following the active matrices
\[
 sM_0\longrightarrow sM_1\longrightarrow sM_2\longrightarrow sM_3=-csM_0.
\]
Think of this three-step amplification as one rotation cycle. After each cycle, the Hessian is amplified by a multiplicative factor of $-c$ in every $Q_\nu$.

We first verify the scale conditions. 
Since $d=(c+1)S\leq2cS$, the worst inequality among the six transverse inequalities in \eqref{eq:scalesplit} is
\[
 \frac{2\delta^2}{\beta^2}
 =\frac{32768}{2480625}
 <\frac5{378}=\varepsilon_0.
\]
Indeed, this is the short transverse direction at the second and third stages; all other scale inequalities are smaller. The other condition $R_i\geq16\ell_i$ follows directly from $\delta=1/25$, $\gamma=3/10$, and $c\geq1$. 
Thus, Lemma \ref{lem:split} applies at all three stages.

The exact boxes after the first stage have side lengths
\[
 (\gamma r,\gamma r,\beta\ell_1)
\]
in the $(x_1,x_2,x_3)$ coordinates. 
After the second stage they have side lengths
\[
 (\beta\ell_2,\gamma^2r,\gamma\beta\ell_1),
\]
and after the third stage they have side lengths
\begin{equation}\label{eq:finalrect}
 (\gamma\beta\ell_2,\beta\ell_3,\gamma^2\beta\ell_1).
\end{equation}
Since $\delta<\gamma$, the shortest side in \eqref{eq:finalrect} is $\beta\ell_3$. 
Each final rectangle therefore contains a cube of side
\[
 \beta\ell_3=\frac{\kappa r}{\sqrt c}.
\]
The three local splittings leave total volume at least $\theta^3|Q|$ in the final exact rectangles. 
Passing from each rectangle in \eqref{eq:finalrect} to the selected cube costs the factor $\delta^3/\gamma^3$. 
Hence, the child cubes have total volume at least $\phi|Q|$, proving (ii).

For the cone bound, the three values of $L/a$ in Lemma \ref{lem:split} can be taken as
\[
 \sqrt3+c+1,
 \qquad
 \sqrt{c^2+2}+c+1,
 \qquad
 \frac{\sqrt{2c^2+1}+c+1}{c}.
\]
(The corresponding values of $a$ can be taken as $1, 1, c$.) The second is the largest for $c\geq1$, and (i) follows from \eqref{eq:localsplitcone} and \eqref{eq:Gammac}. 

The first transverse cutoff is supported in the central $15/16$ of the $x_1$-interval, leaving an $x_1$ boundary collar of width $r/32$. 
All later perturbations and all child cubes lie inside the first exact core, hence in the central $x_1$-slab of width $\gamma r$. 
We may choose a cube of side $r/64$ inside the left boundary collar, which proves (iii).

Finally, supports at each of the three levels are pairwise disjoint. 
By Lemma \ref{lem:split}(iii),
\[
 \|V_{Q,s}\|_{L^\infty(Q)}
 \leq 2Sr^2(\delta^2+\delta^4+\delta^6)
 =\frac{782502}{244140625}Sr^2
 <\frac1{300}Sr^2,
\]
which proves (iv).
\end{proof}

\textbf{Choice of $c$.} We now make the final amplification parameter explicit:
\begin{equation}\label{eq:cchoice}
c=2^{75}.
\end{equation}
The reason for this choice is that
\begin{equation}\label{eq:phikappa}
 \phi\kappa
 =\frac{3^{14}7^4}{2^{43}5^{12}},
\end{equation}
and
\[
 c(\phi\kappa)^2
 =\frac1{2^{11}}
 \left(\frac{3^7 7^2}{5^6}\right)^4
 >\frac{27^4}{2^{19}}>1.
\]
Here we used $3^7 7^2/5^6=107163/15625>27/4$. Consequently,
\begin{equation}\label{eq:growthfactor}
\phi\kappa\sqrt c>1.
\end{equation}

\section{Iteration and gradient blow-up}\label{sec:thm1}
We now iterate the amplification cycle from Lemma~\ref{lem:cycle},
prove the resulting gradient blow-up, and construct the corresponding
smooth uniformly elliptic coefficient matrices.
\begin{proof}[Proof of Theorem \ref{thm:smooth}]
Fix
\[
 Q_0=\left(-\frac12,\frac12\right)^3,
 \qquad |Q_0|=1,
\]
so $Q_0\Subset B_1$, and set
\begin{equation}\label{eq:q}
 q(x)=\frac14x\cdot M_0x,
 \qquad
 D^2q=\frac12M_0.
\end{equation}
Put
\[
 s_m=\frac12(-c)^m,
 \qquad
 \Q_0=\{Q_0\},
 \qquad
 u_0=q.
\]
Inductively, for $m\geq 0$, after $\Q_m$ and $u_m$ are defined, set
\begin{equation}\label{eq:iteration}
 u_{m+1}=u_m+\sum_{Q\in\Q_m}V_{Q,s_m},
\end{equation}
and let $\Q_{m+1}$ be the family of child cubes given by Lemma \ref{lem:cycle}. Write
\[
 \Omega_m=\bigcup_{Q\in\Q_m}Q.
\]
Every cube in $\Q_m$ has side
\begin{equation}\label{eq:rm}
 r_m=\rho^m,
 \qquad
 \rho=\frac{\kappa}{\sqrt c},
\end{equation}
and
\begin{equation}\label{eq:volumelower}
 |\Omega_m|\geq\phi^m.
\end{equation}
Moreover, on a neighborhood of every $Q\in\Q_m$,
\begin{equation}\label{eq:exactm}
 D^2u_m=s_mM_0=\frac{1}{2}(-c)^m M_0.
\end{equation}
Since all perturbations are compactly supported in $Q_0$, every $u_m$ is smooth in $\R^3$ and agrees with $q$ outside $Q_0$.

By \eqref{eq:Vbound},
\begin{equation}\label{eq:uniformincrement}
 \|u_{m+1}-u_m\|_{L^\infty}
 \leq\frac1{300}|s_m|r_m^2
 =\frac1{600}\kappa^{2m}.
\end{equation}
Hence, $(u_m)$ is a Cauchy sequence in $C(B_2)$ and it converges uniformly to some $u\in C(B_2)$.

\medskip

We now estimate the gradients. Let $Q\in\Q_m$ and let $x_Q$ be its center. From \eqref{eq:exactm},
\[
 Du_m(x_Q+y)=s_mM_0y+b_Q
\]
for some $b_Q\in\R^3$. Since $Q$ is symmetric and $M_0$ is orthogonal,
\begin{align*}
 \int_Q|Du_m|\,dx
 &=\frac12\int_Q\bigl(|s_mM_0y+b_Q|+|-s_mM_0y+b_Q|\bigr)\,dy\\
 &\geq |s_m|\int_Q|y|\,dy.
\end{align*}
For a cube of side $r_m$,
\begin{equation}\label{eq:ycube}
 \int_Q|y|\,dy\geq\frac18r_m|Q|,
\end{equation}
because the set $\{|y_1|\geq r_m/4\}$ has measure $|Q|/2$. 
Thus,
\[
 \int_Q|Du_m|\,dx
 \geq\frac1{16}c^m r_m|Q|.
\]
Summing over $Q\in\Q_m$ and using \eqref{eq:volumelower} gives the explicit bound
\begin{equation}\label{eq:gradblow}
 \ \int_{B_1}|Du_m|\,dx
 \geq\frac1{16}(\phi\kappa\sqrt c)^m
 \longrightarrow\infty.
\end{equation}

It remains to produce smooth uniformly elliptic coefficients. Set
\begin{equation}\label{eq:Gammafinal}
 \Gamma=\Gamma(c)
 =16\sqrt2\bigl(\sqrt{c^2+2}+c+1\bigr).
\end{equation}
We claim inductively that, for all $m\geq 0$ and $x\in \R^3$,
\begin{equation}\label{eq:Hessiancone}
 D^2u_m(x)\in\K_\Gamma.
\end{equation}
Indeed, $D^2u_0=M_0/2\in\K_2\subset\K_\Gamma$. At the $m$-th step, $D^2u_m=s_mM_0$ on a neighborhood of every active cube $Q\in\Q_m$. Hence, inside $Q$,
\[
 D^2u_{m+1}=s_mM_0+D^2V_{Q,s_m}\in\K_\Gamma
\]
by Lemma \ref{lem:cycle}, while outside the active cubes $D^2u_{m+1}=D^2u_m$. Since the active cubes are disjoint and the perturbations are compactly supported in them, \eqref{eq:Hessiancone} follows.

Define
\begin{equation}\label{eq:Am}
 A_m(x)=\mathcal A(D^2u_m(x)),
\end{equation}
where $\mathcal A$ is the map from Lemma \ref{lem:selection}. 
Then
\begin{align*}
 & I\leq A_m\leq(\Gamma+1)I,\\
&A_m:D^2u_m=0.   
\end{align*}
The Hessians never vanish, so $A_m$ is smooth. 
For the fixed choice $c=2^{75}$,
\[
 \Gamma+1
 <48(c+1)+1
 <64c
 =2^{81},
\]
where we used $\sqrt{c^2+2}<c+1$ and $\sqrt2<3/2$.

\medskip

All $u_m$ have boundary value $q|_{\partial B_2}$. 
On $\partial B_2$,
\[
 q(x)=1-\frac{x_1^2}{2},
\]
so $|q|\leq1$. 
The maximum principle therefore gives
\[
 \|u_m\|_{L^\infty(B_2)}\leq1.
\]
Together with \eqref{eq:gradblow}, this proves Theorem \ref{thm:smooth}.
\end{proof}

\section{The measurable-coefficient limit}\label{sec:thm2}
We now pass to the limit in the preceding construction and use the
untouched cubes from Lemma~\ref{lem:cycle} to prove that the limiting
approximation solution is not locally of bounded variation.
\begin{proof}[Proof of Theorem \ref{thm:limit}]
We use the same notations as in the proof of Theorem \ref{thm:smooth}.
By Lemma \ref{lem:cycle}(iii), all child cubes of a parent cube lie in the central $x_1$-slab of relative width $\gamma=3/10$. 
Hence,
\begin{equation}\label{eq:volumeupper}
 |\Omega_{m+1}|\leq\frac3{10}|\Omega_m|
 \leq\left(\frac3{10}\right)^{m+1}.
\end{equation}
For $n\geq m$, all changes after stage $m$ are supported in $\Omega_m$. 
Thus,
\[
 D^2u_n=D^2u_m,
 \qquad
 A_n=A_m
 \qquad\text{on }B_2\setminus\Omega_m.
\]
Since $I\leq A_m\leq2^{81}I$,
\[
 \|A_n-A_m\|_{L^1(B_2)}
 \leq2\sqrt3\,2^{81}|\Omega_m|
 \leq\sqrt3\,2^{82}\left(\frac3{10}\right)^m.
\]
Therefore, $A_m\to A$ in $L^1(B_2)$ for some measurable symmetric matrix $A$, and after passing to a subsequence,
\[
 I\leq A\leq2^{81}I
\]
almost everywhere. Since $u_m\to u$ uniformly, $u$ is an $L^1$-approximation solution of \[A(x):D^2u=0 \qquad\text{in }B_2.\]

It remains to show that $u\notin\BV_{\rm loc}(B_1)$. 
For every $Q\in\Q_m$, let $G_Q$ be the untouched cube from Lemma \ref{lem:cycle}. 
All descendants of $Q$ stay in the central slab and therefore never meet $G_Q$. 
Hence, $u$ is smooth near $G_Q$ and
\[
 Du(x_Q'+y)=s_mM_0y+b_Q'
\]
there. 
Here, $x_Q'$ is the center of $G_Q$ and $b_Q'\in \R^3$.
The cubes $G_Q$, over all generations, are pairwise disjoint.
Since $G_Q$ has side $\sigma r_m$, the same argument as in \eqref{eq:ycube} gives
\[
 \int_{G_Q}|Du|\,dx
 \geq\frac{\sigma^4}{16}c^m r_m|Q|.
\]
Summing over $Q\in\Q_m$ yields
\begin{equation}\label{eq:nonBV}
 \sum_{Q\in\Q_m}\int_{G_Q}|Du|\,dx
 \geq\frac{\sigma^4}{16}(\phi\kappa\sqrt c)^m
 \longrightarrow\infty.
\end{equation}
All the $G_Q$ lie in the fixed cube $Q_0\Subset B_1$. 
If $u$ belonged to $\BV$ on a neighborhood of $Q_0$, its total variation would dominate the left-hand side of \eqref{eq:nonBV} for every $m$, a contradiction. 
This proves Theorem \ref{thm:limit}.
\end{proof}

Below, we give a series of remarks concerning Theorems \ref{thm:smooth}--\ref{thm:limit}.

\begin{remark}[Failure of weak-$L^1$ estimates]\label{rem:weakL1}
Our construction in fact rules out even a coefficient-independent weak-$L^1$ estimate for the gradient.
Indeed, on every active cube $Q\in\mathcal Q_m$, centered at $x_Q$, we have
\[
Du_m(x_Q+y)=s_mM_0y+b_Q,
\qquad |s_m|=\frac12 c^m.
\]
By symmetry,
\[
|s_mM_0y+b_Q|+|-s_mM_0y+b_Q|
\ge 2|s_m||y|.
\]
Hence, on a fixed positive fraction of $Q$,
\[
|Du_m|\ge C |s_m|r_m.
\]
Since the total volume of the active cubes is at least $\phi^m|Q_0|$, it follows that
\[
\|Du_m\|_{L^{1,\infty}(B_1)}
\ge C |s_m|r_m\phi^m
= C(\phi\kappa\sqrt c)^m\longrightarrow\infty.
\]
Thus, there is no estimate of the form
\[
\|Du\|_{L^{1,\infty}(B_1)}
\le C(\Lambda)\|u\|_{L^\infty(B_2)}.
\]
In particular, ellipticity alone gives no estimate in any Lorentz space stronger than weak-$L^1$.

\medskip

Our construction disproves estimates whose constants depend only on dimension and ellipticity; it does not address estimates with additional dependence on a modulus of continuity or derivative bounds of the coefficients.
\end{remark}

\begin{remark}[The measurable limit]
Let
\[
\Omega_m=\bigcup_{Q\in\mathcal Q_m}Q,
\qquad
E=\bigcap_{m=0}^\infty \overline \Omega_m.
\]
Since $|\Omega_m|\to0$, we have $|E|=0$. 
Moreover, all changes after step $m$ are supported in $\Omega_m$. 
Hence, for every $x\in B_2\setminus E$, the sequences $(u_m)$ and $(A_m)$ stabilize in a neighborhood of $x$. Consequently, $u,A\in C^\infty(B_2\setminus E)$ and
\begin{align*}
A:D^2u=0
\qquad\text{classically in }B_2\setminus E.
\end{align*}
Thus, the limiting solution is smooth away from a set of measure zero, although
\[
u\notin BV_{\rm loc}(B_1).
\]

The convergence of the coefficients is also stronger than the $L^1$ convergence stated above. 
Since $A_n=A_m$ on $B_2\setminus\Omega_m$ for every $n\ge m$, while the coefficients are uniformly bounded, for every $1\le p<\infty$,
\[
\|A_n-A_m\|_{L^p(B_2)}
\le C|\Omega_m|^{1/p}\longrightarrow0.
\]
Therefore,
\[
A_m\longrightarrow A
\qquad\text{ in $L^p(B_2)$  for every }1\le p<\infty.
\]
Hence, even strong $L^p$ convergence of smooth uniformly elliptic coefficients for every finite $p$, together with uniform convergence of the corresponding solutions and common boundary data, does not yield local $BV$ compactness of the solutions.
\end{remark}

\begin{remark}
The explicit constant $2^{81}$ is not intended to be sharp. 
The exact value of the smallest possible ellipticity ratio for which an ellipticity-only $W^{1,1}$ estimate can fail remains a separate question.
\end{remark}

\begin{remark}[Optimal exponent in $W^{1,\delta}$ estimates] Given the failure of interior $W^{1,1}$ estimates for uniformly elliptic equations in nondivergence form with large ellipticity ratio $\Lambda$ in dimension three, a natural question is now to find the best exponent $\delta(\Lambda)\in (0, 1)$ for which interior $W^{1,\delta}$ estimates hold for these equations. 
This question about solutions with low-integrability derivatives is related to Lin's $L^p$ estimates for second derivatives for some $p>0$ depending on ellipticity; see \cite{Lin86}. One can also ask for the best exponent $p$ here.
\end{remark}

\end{document}